
\documentclass{amsart}
\usepackage{amssymb}
\usepackage{amsfonts}
\usepackage{hyperref}
\usepackage[numbers,sort&compress]{natbib}

\setcounter{MaxMatrixCols}{10}

\newtheorem{theorem}{Theorem}
\theoremstyle{plain}

\newtheorem{corollary}{Corollary}

\newtheorem{lemma}{Lemma}

\newtheorem{remark}{Remark}

\numberwithin{equation}{section}

\input{tcilatex}

\begin{document}
\title[An accurate approximation formula for gamma function]{An accurate
approximation formula for gamma function}
\author{Zhen-Hang Yang and Jing-Feng Tian*}
\address{Zhen-Hang Yang, College of Science and Technology\\
North China Electric Power University, Baoding, Hebei Province, 071051, P.
R. China and Department of Science and Technology, State Grid Zhejiang
Electric Power Company Research Institute, Hangzhou, Zhejiang, 310014, China}
\email{yzhkm\symbol{64}163.com}
\address{Jing-Feng Tian\\
College of Science and Technology\\
North China Electric Power University\\
Baoding, Hebei Province, 071051, P. R. China}
\email{tianjf\symbol{64}ncepu.edu.cn}
\date{October 8, 2017}
\subjclass[2010]{Primary 33B15, 26D15; Secondary 26A48, 26A51}
\keywords{gamma function, monotonicity, convexity, approximation}
\thanks{*Corresponding author: Jing-Feng Tian, e-mail: tianjf\symbol{64}%
ncepu.edu.cn}

\begin{abstract}
In this paper, we present a very accurate approximation for gamma function:%
\begin{equation*}
\Gamma \left( x+1\right) \thicksim \sqrt{2\pi x}\left( \dfrac{x}{e}\right)
^{x}\left( x\sinh \frac{1}{x}\right) ^{x/2}\exp \left( \frac{7}{324}\frac{1}{%
x^{3}\left( 35x^{2}+33\right) }\right) =W_{2}\left( x\right)
\end{equation*}%
as $x\rightarrow \infty $, and prove that the function $x\mapsto \ln \Gamma
\left( x+1\right) -\ln W_{2}\left( x\right) $ is strictly decreasing and
convex from $\left( 1,\infty \right) $ onto $\left( 0,\beta \right) $, where%
\begin{equation*}
\beta =\frac{22\,025}{22\,032}-\ln \sqrt{2\pi \sinh 1}\approx 0.00002407.
\end{equation*}
\end{abstract}

\maketitle


\section{Introduction}

The Stirling's formula%
\begin{equation}
n!\thicksim \sqrt{2\pi n}n^{n}e^{-n}  \label{S}
\end{equation}%
for $n\in \mathbb{N}$ has important applications in probability theory,
statistical physics, number theory, combinatorics and other related fields.
There are many refinements for the Stirling's formula, see for example,
Burnside's \cite{Burnside-MM-46-1917}, Gosper \cite{Gosper-PNAS-75-1978},
Batir \cite{Batir-P-27(1)-2008}, Mortici \cite{Mortici-CMI-19(1)-2010}.

Since gamma function $\Gamma \left( x\right) =\int_{0}^{\infty
}t^{x-1}e^{-t}dt$ for $x>0$ is a generalization of factorial function, some
authors also pay attention to find various better approximations for the
gamma function, for instance, Ramanujan \cite[P. 339]{Ramanujan-SB-1988},
Windschitl \cite[Eq. (42)]{Smith-2006}, \cite{http/gamma}, Smith \cite[Eq.
(42)]{Smith-2006}, Nemes \cite[Corollary 4.1]{Nemes-AM-95-2010}, \cite%
{http/gamma}, Yand and Chu \cite[Propositions 4 and 5]{Yang-AMC-270-2015},
Chen \cite{Chen-JNT-164-2016}.

More results involving the approximation formulas for the factorial or gamma
function can be found in \cite%
{Shi-JCAM-195-2006,Guo-JIPAM-9(1)-2008,Batir-AM-91-2008,Mortici-AM-93-2009-1, Mortici-MMN-11(1)-2010,Mortici-CMA-61-2011,Zhao-PMD-80(3-4)-2012, Mortici-MCM-57-2013,Feng-NA-64-2013,Qi-JCAM-268-2014,Lu-RJ-35(1)-2014, Yang-AMC-270-2015,Lu-AMC-253-2015,Yang-JIA-2018}
and the references cited therein. Several nice inequalities between gamma
function and the truncations of its asymptotic series can be found in \cite%
{Alzer-MC-66-1997,Yang-JMAA-441-2016}.

Now let us focus on the Windschitl's approximation formula (see \cite[Eq.
(42)]{Smith-2006}, \cite{http/gamma}) defined by%
\begin{equation}
\Gamma \left( x+1\right) =\sqrt{2\pi x}\left( \dfrac{x}{e}\right) ^{x}\left(
x\sinh \frac{1}{x}\right) ^{x/2}:=W_{0}\left( x\right) .  \label{W0}
\end{equation}%
As shown in \cite{Chen-JNT-164-2016}, the rate of Windschitl's approximation 
$W_{0}\left( x\right) $ converging to $\Gamma \left( x+1\right) $ is like $%
x^{-5}$ as $x\rightarrow \infty $, and is faster if replacing $W_{0}\left(
x\right) $ by%
\begin{equation}
W_{1}\left( x\right) =\sqrt{2\pi x}\left( \dfrac{x}{e}\right) ^{x}\left(
x\sinh \frac{1}{x}+\frac{1}{810x^{6}}\right) ^{x/2}  \label{W1}
\end{equation}%
(see \cite{http/gamma}). These show that $W_{0}\left( x\right) $ and $%
W_{1}\left( x\right) $ are excellent approximations for gamma function.

In 2009, Alzer \cite{Alzer-PRSE-139A-2009} proved that for all $x>0$,%
\begin{equation}
\sqrt{2\pi x}\left( \dfrac{x}{e}\right) ^{x}\left( x\sinh \frac{1}{x}\right)
^{x/2}\left( 1+\frac{\alpha }{x^{5}}\right) <\Gamma \left( x+1\right) =\sqrt{%
2\pi x}\left( \dfrac{x}{e}\right) ^{x}\left( x\sinh \frac{1}{x}\right)
^{x/2}\left( 1+\frac{\beta }{x^{5}}\right)  \label{A-I}
\end{equation}%
with the best possible constants $\alpha =0$ and $\beta =1/1620$. Lu, Song
and Ma \cite{Lu-JNT-140-2014} extended Windschitl's formula as%
\begin{equation*}
\Gamma \left( n+1\right) \thicksim \sqrt{2\pi n}\left( \dfrac{n}{e}\right)
^{n}\left[ n\sinh \left( \frac{1}{n}+\frac{a_{7}}{n^{7}}+\frac{a_{9}}{n^{9}}+%
\frac{a_{11}}{n^{11}}+\cdot \cdot \cdot \right) \right] ^{n/2}
\end{equation*}%
with $a_{7}=1/810,a_{9}=-67/42525,a_{11}=19/8505,...$. An explicit formula
for determining the coefficients of $n^{-k}$ ($n\in \mathbb{N}$) was given
in \cite[Theorem 1]{Chen-AMC-245-2014} by Chen. Another asymptotic expansion%
\begin{equation}
\Gamma \left( x+1\right) \thicksim \sqrt{2\pi x}\left( \dfrac{x}{e}\right)
^{x}\left( x\sinh \frac{1}{x}\right) ^{x/2+\sum_{j=0}^{\infty }r_{j}x^{-j}}%
\text{, \ }x\rightarrow \infty  \label{g-ae-C0}
\end{equation}%
was presented in the same paper \cite[Theorem 2]{Chen-AMC-245-2014}.

Motivated by the above comments, the aim of this paper is to provide a more
accurate Windschitl type approximation:%
\begin{equation}
\Gamma \left( x+1\right) \thicksim \sqrt{2\pi x}\left( \dfrac{x}{e}\right)
^{x}\left( x\sinh \frac{1}{x}\right) ^{x/2}\exp \left( \frac{7}{324}\frac{1}{%
x^{3}\left( 35x^{2}+33\right) }\right) =W_{2}\left( x\right)  \label{W2}
\end{equation}%
as $x\rightarrow \infty $. Our main result is contained in the following
theorem.

\begin{theorem}
\label{MT-1}The function%
\begin{equation*}
f_{0}\left( x\right) =\ln \Gamma \left( x+1\right) -\ln \sqrt{2\pi }-\left(
x+\frac{1}{2}\right) \ln x+x-\frac{x}{2}\ln \left( x\sinh \frac{1}{x}\right)
-\frac{7}{324}\frac{1}{x^{3}\left( 35x^{2}+33\right) }
\end{equation*}%
is strictly decreasing and convex from $\left( 1,\infty \right) $ onto $%
\left( 0,f_{0}\left( 1\right) \right) $, where%
\begin{equation*}
f_{0}\left( 1\right) =\frac{22\,025}{22\,032}-\ln \sqrt{2\pi \sinh 1}\approx
0.00002407.
\end{equation*}
\end{theorem}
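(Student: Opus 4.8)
The plan is to put all the weight on the single inequality
\[
f_{0}''(x)>0\qquad\text{for }x>1,
\]
and to deduce everything else by soft arguments. Indeed, $f_{0}$ extends continuously to $[1,\infty)$; substituting $x=1$ (with $\Gamma(2)=1$, $\ln 1=0$ and $\tfrac{7}{324}\cdot\tfrac{1}{35+33}=\tfrac{7}{22032}$) gives $f_{0}(1)=\tfrac{22025}{22032}-\ln\sqrt{2\pi\sinh 1}$; and combining Stirling's asymptotic expansion of $\ln\Gamma(x+1)$ with $\ln\!\big(x\sinh\tfrac1x\big)=\tfrac{1}{6x^{2}}-\tfrac{1}{180x^{4}}+O(x^{-6})$ and the expansion of the rational correction term shows that the numbers $7/324$, $35$, $33$ occurring in $W_{2}$ are chosen precisely so that $f_{0}(x)=O(x^{-7})$; in particular $\lim_{x\to\infty}f_{0}(x)=0$ and $\lim_{x\to\infty}f_{0}'(x)=0$. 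Granting $f_{0}''>0$ on $(1,\infty)$: then $f_{0}'$ is strictly increasing there with limit $0$, hence $f_{0}'<0$ on $(1,\infty)$, so $f_{0}$ is strictly decreasing as well as (strictly) convex; and since $f_{0}(x)>\lim_{x\to\infty}f_{0}=0$ while $f_{0}$ is continuous and strictly monotone, $f_{0}$ maps $(1,\infty)$ onto $\big(0,f_{0}(1)\big)$. (If one prefers to argue monotonicity directly, the same method below applies to $f_{0}'<0$ with $\psi$, $\ln\sinh$, $\coth$ in place of $\psi'$.)

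\textbf{Reducing $f_{0}''>0$.} Using $(\ln\Gamma(x+1))'=\psi(x+1)$ and $(\ln\Gamma(x+1))''=\psi'(x+1)$, a short computation gives
\[
f_{0}'(x)=\psi(x+1)-\ln x-\frac{1}{2x}-\frac12\ln\!\Big(x\sinh\frac1x\Big)+\frac{\coth(1/x)}{2x}-\frac12-g'(x),
\]
with $g(x)=\tfrac{7}{324}\big(x^{3}(35x^{2}+33)\big)^{-1}$; differentiating once more, the two $\coth(1/x)$ terms cancel and
\[
f_{0}''(x)=\psi'(x+1)-\frac{3}{2x}+\frac{1}{2x^{2}}+\frac{1}{2x^{3}\sinh^{2}(1/x)}-g''(x),
\]
where $g''(x)>0$ for $x>0$ (a direct check, reducing to the positivity of a quadratic in $x^{2}$ with positive coefficients). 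I would then invoke two classical \emph{enveloping} expansions. The Stirling-type series $\psi'(x+1)\sim\tfrac1x-\tfrac1{2x^{2}}+\sum_{k\ge1}B_{2k}x^{-(2k+1)}$ is enveloping, so any truncation of it bounds $\psi'(x+1)$ from one side, the error having the sign of the first omitted term. And, writing $t=1/x\in(0,1)$, the Maclaurin series
\[
\frac{t^{2}}{\sinh^{2}t}=1-\frac{t^{2}}{3}+\frac{t^{4}}{15}-\frac{2t^{6}}{189}+\cdots
\]
is alternating and enveloping on $(0,\pi)$ — as one sees from the partial-fraction expansion $\dfrac{1}{\sinh^{2}z}=\sum_{n\in\mathbb Z}\dfrac{1}{(z+in\pi)^{2}}$ — so it too yields two-sided bounds for $\tfrac{1}{2x^{3}\sinh^{2}(1/x)}$.

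\textbf{The main obstacle.} Substituting the suitable lower estimates into the formula for $f_{0}''$ and clearing the positive common denominator reduces $f_{0}''(x)>0$ to an inequality $P(t)>0$ for $t\in(0,1)$, where $P$ is an explicit polynomial with rational coefficients. The delicate feature is that in $f_{0}''$ the terms of orders $x^{-1},x^{-2},x^{-3},x^{-5}$ and $x^{-7}$ cancel identically — this is exactly what the parameters in $W_{2}$ are engineered to achieve (in fact $f_{0}''=O(x^{-9})$) — so the two enveloping expansions must be carried several terms past $x^{-7}$, say through $x^{-9}$ or $x^{-11}$, before the resulting lower bound for $f_{0}''$ becomes manifestly positive; keep too few terms and the bound is vacuous, too many and $P$ balloons. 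Once the truncation orders and the one-sidedness are chosen correctly, $P$ has moderate degree and I expect it to succumb to checking that all of its coefficients are nonnegative (after at most a single regrouping, or one further differentiation of $P$). So there is little conceptual difficulty here; the work is the bookkeeping of the enveloping estimates to sufficiently high order and the ensuing polynomial verification, and that is the hard part.

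\textbf{An alternative route (and a cross-check).} Because $\ln\Gamma(x+2)-\ln\Gamma(x+1)=\ln(x+1)$, the shifted difference $\varphi(x):=f_{0}(x)-f_{0}(x+1)$ is an \emph{elementary} function, with no gamma function in it, and $f_{0}(x)=\sum_{k\ge0}\varphi(x+k)$ since $f_{0}(x+n)\to0$. Hence it would suffice to show that $\varphi$ is positive, decreasing and convex on $(1,\infty)$, which again reduces to polynomial/power-series positivity but dispenses with $\psi$ and $\psi'$ altogether; I would use this variant to double-check the computational steps.
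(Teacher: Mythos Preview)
Your high-level plan coincides with the paper's: compute $f_0''$, note $f_0,f_0'\to 0$, and reduce everything to $f_0''>0$ on $[1,\infty)$ by bounding the two transcendental pieces from below and checking positivity of an explicit polynomial in $t=1/x$. The differences are in the choice of lower bounds. For $\psi'(x+1)$ the paper does \emph{not} truncate the asymptotic series; instead it proves a rational (Pad\'e-type) lower bound
\[
\psi'\!\left(x+\tfrac12\right)>x\,\frac{x^4+\tfrac{227}{66}x^2+\tfrac{4237}{2640}}{x^6+\tfrac{155}{44}x^4+\tfrac{329}{176}x^2+\tfrac{375}{4928}}
\]
by the recurrence trick $g_1(x)-g_1(x+1)<0$ together with $g_1(x+n)\to 0$ (this is, in spirit, your ``alternative route'' applied to $\psi'$ rather than to $f_0$ itself). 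For the hyperbolic term the paper does not appeal to an enveloping series for $t^2/\sinh^2 t$ directly; it uses a known alternating-truncation bound for $t/\sinh t$ (your argument via the partial-fraction expansion is plausible but is not the standard reference here) and then \emph{squares} it. The payoff of these choices is a slightly tighter fit to the $O(x^{-9})$ cancellation with fewer series terms.

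One concrete caution: your expectation that the final polynomial will have nonnegative coefficients ``after at most a single regrouping'' is too optimistic. In the paper the resulting degree-$22$ polynomial $p_{22}(t)$ has nine negative coefficients scattered among the positive ones, and its positivity on $(0,1]$ is handled by a separate sign-pattern lemma: drop all positive $a_k t^k$ with $k\ge 4$, keep the negative ones and the constant-through-cubic terms, observe that the reduced polynomial has the form $-\sum_{i>m}|a_i|t^i+\sum_{i\le m}a_i t^i$ (unique positive root), and check its value at $t=1$. You should anticipate needing an argument of this kind rather than a clean coefficient check. Your telescoping variant $f_0(x)=\sum_{k\ge 0}\varphi(x+k)$ is a nice cross-check and does not appear in the paper.
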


\section{Lemmas}

Since the function $f_{0}\left( x\right) $ contains gamma and hyperbolic
functions, it is very hard to deal with its monotonicity and convexity by
usual approaches. For this, we need the following lemmas, which provide a
new way to prove our result.

\begin{lemma}
\label{L-dpsi>}The inequality%
\begin{equation*}
\psi ^{\prime }\left( x+\frac{1}{2}\right) >x\frac{x^{4}+\frac{227}{66}x^{2}+%
\frac{4237}{2640}}{x^{6}+\frac{155}{44}x^{4}+\frac{329}{176}x^{2}+\frac{375}{%
4928}}
\end{equation*}%
holds for $x>0$.
\end{lemma}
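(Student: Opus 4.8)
The plan is to strip the trigamma function out of the inequality by a telescoping argument, reducing the claim to an elementary rational one. Write $R(x)$ for the right-hand side and set $F(x)=\psi'(x+\tfrac12)-R(x)$, so the goal is $F>0$ on $(0,\infty)$. I would start from the trigamma series $\psi'(z)=\sum_{n\ge0}(z+n)^{-2}$, which gives $\psi'(x+\tfrac12)=\sum_{k\ge0}(x+k+\tfrac12)^{-2}$, together with the fact that $R(z)\to0$ as $z\to\infty$ (the numerator of $R$ has degree $5$, the denominator degree $6$), so that $R(x)=\sum_{k\ge0}\bigl(R(x+k)-R(x+k+1)\bigr)$ telescopes. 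Subtracting the two expansions term by term gives
\[
F(x)=\sum_{k\ge0}h(x+k),\qquad h(y):=\frac{1}{(y+\tfrac12)^2}-R(y)+R(y+1),
\]
and since $h$ is now a \emph{rational} function — all digamma content has vanished — it suffices to prove $h(y)>0$ for every $y>0$; positivity of each summand then forces $F(x)\ge h(x)>0$, which is the lemma.

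To deal with $h$, I would put its three terms over the common denominator $(y+\tfrac12)^2 D(y)D(y+1)$, where $D(y)=y^6+\tfrac{155}{44}y^4+\tfrac{329}{176}y^2+\tfrac{375}{4928}$ is the denominator of $R$ and satisfies $D>0$ on $\mathbb R$ since all its coefficients are positive. A direct simplification should show that the numerator collapses to a positive constant:
\[
h(y)=\frac{c}{(y+\tfrac12)^2 D(y)D(y+1)},\qquad c=\Bigl(\tfrac{60}{77}\Bigr)^{2}>0 .
\]
That the numerator must be constant can be anticipated: $R$ is exactly the Padé approximant of type $[2/3]$, in the variable $1/x^{2}$, of $x\,\psi'(x+\tfrac12)$, so $F(x)=O(x^{-13})$ and hence $h(x)=F(x)-F(x+1)=O(x^{-14})$ as $x\to\infty$; since the displayed denominator is monic of degree $14$, its numerator is forced to have degree $0$. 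Granting $h>0$, the proof is complete; as a bonus one also gets the convergent representation $F(x)=\sum_{k\ge0}h(x+k)$, which yields sharp two-sided bounds for $\psi'(x+\tfrac12)-R(x)$.

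The step I expect to be the real work is the polynomial identity that the numerator of $h$ equals the constant $c$ (and that $c>0$). This is only algebra, but expanding $D(y)D(y+1)$ against $(y+\tfrac12)^2\bigl[\,yN_1(y)D(y+1)-(y+1)N_1(y+1)D(y)\,\bigr]$, where $N_1(y)=y^4+\tfrac{227}{66}y^2+\tfrac{4237}{2640}$, and checking that every nonconstant coefficient cancels is delicate because of the awkward rational coefficients; clearing the numerical denominators first, or a computer-algebra check, is advisable. If one would rather not rely on the ``numerator is a constant'' phenomenon, it is equally good to clear denominators and verify directly that the numerator polynomial of $h$ has only nonnegative coefficients, which makes $h>0$ on $(0,\infty)$ self-evident.
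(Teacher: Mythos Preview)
Your approach is essentially the same as the paper's: define $F(x)=\psi'(x+\tfrac12)-R(x)$ and reduce $F>0$ to the positivity of the rational function $h(y)=F(y)-F(y+1)=\tfrac{1}{(y+\tfrac12)^2}-R(y)+R(y+1)$, using that $F(x+n)\to0$. The paper phrases this as $g_1(x)>g_1(x+1)>\cdots>0$ via the recurrence $\psi'(x+1)-\psi'(x)=-x^{-2}$ rather than the trigamma series, but the content is identical, and the paper's explicit numerator $58\,982\,400$ (with denominator $(2y+1)^2\cdot4928D(y)\cdot4928D(y+1)$) confirms your predicted constant $c=(60/77)^2$.
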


\begin{proof}
Let%
\begin{equation*}
g_{1}\left( x\right) =\psi ^{\prime }\left( x+\frac{1}{2}\right) -x\frac{%
x^{4}+\frac{227}{66}x^{2}+\frac{4237}{2640}}{x^{6}+\frac{155}{44}x^{4}+\frac{%
329}{176}x^{2}+\frac{375}{4928}}
\end{equation*}

Then by the recurrence formula \cite[p. 260, (6.4.6)]{Abramowttz.1972}%
\begin{equation*}
\ \psi ^{\prime }\left( x+1\right) -\psi ^{\prime }\left( x\right) =-\frac{1%
}{x^{2}}
\end{equation*}%
we have%
\begin{eqnarray*}
g_{1}\left( x+1\right) -g_{1}\left( x\right) &=&\psi ^{\prime }\left( x+%
\frac{3}{2}\right) -\frac{\left( x+1\right) \left( \left( x+1\right) ^{4}+%
\frac{227}{66}\left( x+1\right) ^{2}+\frac{4237}{2640}\right) }{\left(
x+1\right) ^{6}+\frac{155}{44}\left( x+1\right) ^{4}+\frac{329}{176}\left(
x+1\right) ^{2}+\frac{375}{4928}} \\
&&-\psi ^{\prime }\left( x+\frac{1}{2}\right) +\frac{x\left( x^{4}+\frac{227%
}{66}x^{2}+\frac{4237}{2640}\right) }{x^{6}+\frac{155}{44}x^{4}+\frac{329}{%
176}x^{2}+\frac{375}{4928}}
\end{eqnarray*}%
\begin{eqnarray*}
&=&-58\,982\,400\left( 2x+1\right) ^{-2}\left(
4928x^{6}+17\,360x^{4}+9212x^{2}+375\right) ^{-1}\times \\
&&\left(
4928x^{6}+29\,568x^{5}+91\,280x^{4}+168\,000x^{3}+187\,292x^{2}+117\,432x+31%
\,875\right) ^{-1}<0.
\end{eqnarray*}%
It then follows that%
\begin{equation*}
g_{1}\left( x\right) >g_{1}\left( x+1\right) >...>\lim_{n\rightarrow \infty
}g_{1}\left( x+n\right) =0,
\end{equation*}%
which proves the desired inequality, and the proof is done.
\end{proof}

\begin{lemma}
\label{L-1/sh>}The inequalities%
\begin{equation}
\frac{t}{\sinh t}>1-\frac{1}{6}t^{2}+\frac{7}{360}t^{4}-\frac{31}{15\,120}%
t^{6}+\frac{127}{604\,800}t^{8}-\frac{73}{3421\,440}t^{10}>0  \label{t/sht>}
\end{equation}%
hold for $t\in (0,1]$.
\end{lemma}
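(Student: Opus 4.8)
The plan is to compare $t/\sinh t$ with its own Maclaurin polynomial of degree $10$. Recall the classical expansion
\begin{equation*}
\frac{t}{\sinh t}=\sum_{n=0}^{\infty}a_{n}t^{2n},\qquad a_{n}=\frac{\left(2-2^{2n}\right)B_{2n}}{(2n)!},
\end{equation*}
valid for $|t|<\pi$, where $B_{2n}$ is the $2n$-th Bernoulli number; a direct check gives $(a_{0},a_{1},\dots ,a_{5})=\bigl(1,-\tfrac16,\tfrac{7}{360},-\tfrac{31}{15120},\tfrac{127}{604800},-\tfrac{73}{3421440}\bigr)$, so the polynomial in \eqref{t/sht>} is exactly $P(t):=\sum_{n=0}^{5}a_{n}t^{2n}$. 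Thus the left inequality in \eqref{t/sht>} amounts to $\sum_{n\ge 6}a_{n}t^{2n}>0$ and the right one to $P(t)>0$ on $(0,1]$.

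The first step is to control the signs and sizes of the $a_{n}$. Since $\operatorname{sgn}B_{2n}=(-1)^{n+1}$ and $2-2^{2n}<0$ for $n\ge 1$, we have $\operatorname{sgn}a_{n}=(-1)^{n}$, so the series alternates. Using $|B_{2n}|=2(2n)!\,\zeta(2n)/(2\pi)^{2n}$,
\begin{equation*}
\frac{|a_{n+1}|}{|a_{n}|}=\frac{2^{2n+2}-2}{4\left(2^{2n}-2\right)}\cdot\frac{\zeta(2n+2)}{\zeta(2n)}\cdot\frac{1}{\pi^{2}}<\frac{7}{4}\cdot 1\cdot\frac{1}{\pi^{2}}<1\qquad(n\ge 1),
\end{equation*}
because $\zeta$ is decreasing on $(1,\infty)$ and $\dfrac{2^{2n+2}-2}{4(2^{2n}-2)}=1+\dfrac{6}{2^{2n+2}-8}\le\dfrac74$ for $n\ge 1$. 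Hence $(|a_{n}|)_{n\ge 1}$ is strictly decreasing, and for $t\in(0,1]$ so is $\bigl(|a_{n}|t^{2n}\bigr)_{n\ge 0}$; thus $\sum_{n\ge0}a_{n}t^{2n}$ is an enveloping alternating series whose remainder after any partial sum carries the sign of the first omitted term. The partial sum $P(t)$ stops at the negative term $a_{5}t^{10}$, so $\sum_{n\ge6}a_{n}t^{2n}$ has the sign of $a_{6}t^{12}>0$, which gives the left inequality.

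For the right inequality, group $P(t)=(a_{0}+a_{1}t^{2})+(a_{2}t^{4}+a_{3}t^{6})+(a_{4}t^{8}+a_{5}t^{10})$. The first bracket is $1-\tfrac16t^{2}\ge\tfrac56$; for $k=1,2$ the bracket $a_{2k}t^{4k}+a_{2k+1}t^{4k+2}=t^{4k}\bigl(|a_{2k}|-|a_{2k+1}|t^{2}\bigr)$ is positive on $(0,1]$ since $|a_{2k+1}|t^{2}\le|a_{2k+1}|<|a_{2k}|$ by the monotonicity established above. Adding the three pieces gives $P(t)>\tfrac56>0$.

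The one genuinely delicate point is the uniform ratio bound $|a_{n+1}|/|a_{n}|<1$ for all $n\ge 1$, i.e.\ the monotonicity of $(|a_{n}|)$; the Bernoulli--zeta identity together with the trivial estimate $\zeta(2n+2)<\zeta(2n)$ dispatches it, after which everything reduces to elementary bookkeeping with the alternating series. (If one prefers to avoid Bernoulli numbers, one may instead set $F(t)=t-P(t)\sinh t$, observe that $F$ vanishes to order $12$ at $0$ with positive leading Taylor coefficient, and estimate the remaining power series on $[0,1]$, but this is essentially the same computation.)
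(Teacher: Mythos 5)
Your proof is correct, but it follows a different route from the paper's. For the first inequality the paper simply invokes a ready-made result (Theorem 1 of Yang, J. Math. Anal. Appl. \textbf{441} (2016), 549--564), which states that the odd-indexed partial sums of the Bernoulli series for $1/\sinh t$ bound it from below; you instead re-derive this enveloping property from scratch for the case at hand, via the identity $|B_{2n}|=2(2n)!\,\zeta(2n)/(2\pi)^{2n}$, the uniform ratio bound $|a_{n+1}|/|a_n|\le \tfrac{7}{4\pi^2}<1$, and the alternating-series remainder estimate. Your computation checks out (the coefficients in \eqref{t/sht>} do match $a_0,\dots,a_5$, the ratio bound is valid for $n\ge1$ and trivially for $n=0$, and strict monotone decay of $|a_n|t^{2n}$ gives the remainder the strict sign of $a_6t^{12}>0$), so the argument is self-contained where the paper's is a citation; the price is that you need the Bernoulli--zeta identity, whereas the paper outsources all of this. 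For the positivity of the degree-$10$ polynomial the two arguments genuinely diverge: the paper substitutes $t^2=1-x$ and observes that the resulting polynomial in $x$ has all positive coefficients, while you pair consecutive terms and use $|a_{2k+1}|<|a_{2k}|$ to get the stronger conclusion $P(t)>\tfrac56$ on $(0,1]$. Your pairing argument is shorter and avoids the rational arithmetic of the change of variable; the paper's substitution is more mechanical but verifiable by brute force. Both are complete proofs.
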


\begin{proof}
It was proved in \cite[Theorem 1]{Yang-JMAA-441-2016} that for integer $%
n\geq 0$, the double inequality%
\begin{equation}
-\sum_{i=0}^{2n+1}\frac{2\left( 2^{2i-1}-1\right) B_{2i}}{\left( 2i\right) !}%
t^{2i-1}<\frac{1}{\sinh t}<-\sum_{i=0}^{2n}\frac{2\left( 2^{2i-1}-1\right)
B_{2i}}{\left( 2i\right) !}t^{2i-1}  \label{csch-b}
\end{equation}%
holds for $x>0$. Taking $n=2$ yields 
\begin{equation*}
\frac{1}{\sinh t}>\frac{1}{t}-\frac{1}{6}t+\frac{7}{360}t^{3}-\frac{31}{%
15\,120}t^{5}+\frac{127}{604\,800}t^{7}-\frac{73}{3421\,440}t^{9}:=\frac{%
h\left( t\right) }{t},
\end{equation*}%
which is equivalent to the first inequality of (\ref{t/sht>}) for all $t>0$.

Since $x\in (0,1]$, making a change of variable $t^{2}=1-x\in (0,1]$ we
obtain%
\begin{eqnarray*}
h\left( t\right) &=&\frac{73}{3421\,440}x^{5}+\frac{12\,371}{119\,750\,400}%
x^{4}+\frac{85\,243}{59\,875\,200}x^{3} \\
&&+\frac{858\,623}{59\,875\,200}x^{2}+\frac{15\,950\,191}{119\,750\,400}x+%
\frac{14\,556\,793}{17\,107\,200}>0,
\end{eqnarray*}%
which proves the second one, and the proof is complete.
\end{proof}

The following lemma offers a simple criterion to determine the sign of a\
class of special polynomial on given interval contained in $\left( 0,\infty
\right) $ without using Descartes' Rule of Signs, which play an important
role in studying for certain special functions, see for example \cite%
{Yang-AAA-2014-702718}, \cite{Yand-JMI-ip}. A series version can be found in 
\cite{Yang-arxiv-1705-05704}.

\begin{lemma}[{\protect\cite[Lemma 7]{Yang-AAA-2014-702718}}]
\label{L-pz}Let $n\in \mathbb{N}$ and $m\in \mathbb{N}\cup \{0\}$ with $n>m$
and let $P_{n}\left( t\right) $ be an $n$ degrees polynomial defined by 
\begin{equation}
P_{n}\left( t\right) =\sum_{i=m+1}^{n}a_{i}t^{i}-\sum_{i=0}^{m}a_{i}t^{i},
\label{2.4}
\end{equation}%
where $a_{n},a_{m}>0$, $a_{i}\geq 0$ for $0\leq i\leq n-1$ with $i\neq m$.
Then there is a unique number $t_{m+1}\in \left( 0,\infty \right) $ to
satisfy $P_{n}\left( t\right) =0$ such that $P_{n}\left( t\right) <0$ for $%
t\in \left( 0,t_{m+1}\right) $ and $P_{n}\left( t\right) >0$ for $t\in
\left( t_{m+1},\infty \right) $.

Consequently, for given $t_{0}>0$, if $P_{n}\left( t_{0}\right) >0$ then $%
P_{n}\left( t\right) >0$ for $t\in \left( t_{0},\infty \right) $ and if $%
P_{n}\left( t_{0}\right) <0$ then $P_{n}\left( t\right) <0$ for $t\in \left(
0,t_{0}\right) $.
\end{lemma}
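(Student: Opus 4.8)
The plan is to strip away the mixed monotone behaviour of $P_{n}$ by dividing out $t^{m}$ and reducing the entire statement to one monotonicity fact. For $t>0$ set
\[
Q\left( t\right) =\frac{P_{n}\left( t\right) }{t^{m}}=\sum_{i=m+1}^{n}a_{i}t^{i-m}-\sum_{i=0}^{m}a_{i}t^{i-m}=:A\left( t\right) -B\left( t\right) .
\]
Since $t^{m}>0$ on $\left( 0,\infty \right) $, the functions $P_{n}$ and $Q$ have exactly the same sign and the same positive zeros there, so it suffices to analyse $Q$.

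First I would show that $Q$ is strictly increasing on $\left( 0,\infty \right) $. In $A\left( t\right) =\sum_{i=m+1}^{n}a_{i}t^{i-m}$ every exponent $i-m$ is $\geq 1$ and every coefficient is $\geq 0$ with $a_{n}>0$, so $A^{\prime }\left( t\right) =\sum_{i=m+1}^{n}a_{i}\left( i-m\right) t^{i-m-1}>0$ for $t>0$; thus $A$ is strictly increasing. In $B\left( t\right) =\sum_{i=0}^{m}a_{i}t^{i-m}$ every exponent $i-m$ is $\leq 0$ and every coefficient is $\geq 0$, so $B^{\prime }\left( t\right) =\sum_{i=0}^{m}a_{i}\left( i-m\right) t^{i-m-1}\leq 0$, i.e. $B$ is non-increasing and $-B$ is non-decreasing. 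Hence $Q^{\prime }\left( t\right) =A^{\prime }\left( t\right) -B^{\prime }\left( t\right) >0$ on $\left( 0,\infty \right) $. This is exactly where the factor $t^{m}$ pays off: in $P_{n}$ itself the upper block is increasing while the lower block $-\sum_{i=0}^{m}a_{i}t^{i}$ is non-increasing, so $P_{n}$ need not be monotone, whereas shifting all exponents down by $m$ turns the lower block into the non-decreasing function $-B$ and makes $Q$ monotone.

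Next I would read off the limits. As $t\rightarrow 0^{+}$, $A\left( t\right) \rightarrow 0$ while $B\left( t\right) \geq a_{m}>0$ (the index $i=m$ contributes the constant $a_{m}$, all other terms being non-negative), so $Q\left( 0^{+}\right) <0$. As $t\rightarrow \infty $, the leading term $a_{n}t^{n-m}$ with $n-m\geq 1$ forces $A\left( t\right) \rightarrow \infty $ while $B\left( t\right) \rightarrow a_{m}$, so $Q\left( t\right) \rightarrow +\infty $. Being continuous and strictly increasing with $Q\left( 0^{+}\right) <0<Q\left( \infty \right) $, $Q$ has exactly one zero $t_{m+1}\in \left( 0,\infty \right) $, with $Q<0$ on $\left( 0,t_{m+1}\right) $ and $Q>0$ on $\left( t_{m+1},\infty \right) $. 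Multiplying back by $t^{m}>0$ transfers this sign pattern and the uniqueness of the zero verbatim to $P_{n}$. The case $m=0$ needs no separate treatment: then $B\left( t\right) =a_{0}=a_{m}$ is constant and the same computation applies.

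Finally, the ``consequently'' clause is immediate from the single sign change: if $P_{n}\left( t_{0}\right) >0$ then $t_{0}>t_{m+1}$, whence $P_{n}>0$ on $\left( t_{0},\infty \right) \subset \left( t_{m+1},\infty \right) $; if $P_{n}\left( t_{0}\right) <0$ then $t_{0}<t_{m+1}$, whence $P_{n}<0$ on $\left( 0,t_{0}\right) \subset \left( 0,t_{m+1}\right) $. The whole argument is elementary, and the only genuine obstacle is recognising the normalisation $Q=P_{n}/t^{m}$; without it one is tempted to track the sign changes of $P_{n}^{\prime }$, which defeats the purpose of avoiding Descartes' rule and is considerably messier.
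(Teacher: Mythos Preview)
The paper does not supply its own proof of this lemma; it merely quotes the result from \cite{Yang-AAA-2014-702718}. So there is nothing to compare against, and the question reduces to whether your argument stands on its own.

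It does. The normalisation $Q(t)=P_{n}(t)/t^{m}$ is exactly the right move: it converts the ``negative'' block $-\sum_{i=0}^{m}a_{i}t^{i}$, which is non-increasing in $t$, into the non-decreasing function $-B(t)=-\sum_{i=0}^{m}a_{i}t^{i-m}$, and the strict positivity of $a_{n}$ guarantees $A'(t)>0$, giving $Q'>0$ outright. The boundary behaviour is handled correctly: at $t\to 0^{+}$ you have $A\to 0$ while $B(t)\geq a_{m}>0$ (indeed $B(t)\to+\infty$ if any $a_{i}>0$ with $i<m$, but either way $Q(0^{+})<0$), and at $t\to\infty$ the leading term $a_{n}t^{n-m}$ dominates. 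Strict monotonicity plus the sign change then yields the unique root and the stated sign pattern, and the ``consequently'' clause is an immediate corollary. No gaps.
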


\section{Proof of Theorem \protect\ref{MT-1}}

With the aid of lemmas collected in Section 2, we can prove Theorem \ref%
{MT-1}.

\begin{proof}[Proof of Theorem \protect\ref{MT-1}]
Differentiation yields%
\begin{eqnarray*}
f_{0}^{\prime }\left( x\right) &=&\psi \left( x+1\right) -\frac{1}{2}\ln
\left( x\sinh \frac{1}{x}\right) +\frac{1}{2x}\coth \frac{1}{x} \\
&&-\ln x-\frac{1}{2x}-\frac{1}{2}+\frac{7}{324}\frac{175x^{2}+99}{%
x^{4}\left( 35x^{2}+33\right) ^{2}}, \\
f_{0}^{\prime \prime }\left( x\right) &=&\psi ^{\prime }\left( x+1\right) +%
\frac{1}{2x^{3}}\frac{1}{\sinh ^{2}\left( 1/x\right) } \\
&&-\frac{3}{2x}+\frac{1}{2x^{2}}-\frac{7}{54}\frac{6125x^{4}+6545x^{2}+2178}{%
x^{5}\left( 35x^{2}+33\right) ^{3}}.
\end{eqnarray*}

Since $\lim_{x\rightarrow \infty }f_{0}\left( x\right) =\lim_{x\rightarrow
\infty }f_{0}^{\prime }\left( x\right) =0$, it suffices to prove $%
f_{0}^{\prime \prime }\left( x\right) >0$ for $x\geq 1$. Replacing $x$ by $%
\left( x+1/2\right) $ in the right hand side inequality of (\ref{L-dpsi>})
leads to%
\begin{equation*}
\psi ^{\prime }\left( x+1\right) >\frac{7}{30}\frac{\left( 2x+1\right)
\left( 165x^{4}+330x^{3}+815x^{2}+650x+417\right) }{%
77x^{6}+231x^{5}+560x^{4}+735x^{3}+623x^{2}+294x+60},
\end{equation*}%
which indicates that%
\begin{eqnarray*}
f_{0}^{\prime \prime }\left( x\right) &>&\frac{7}{30}\frac{\left(
2x+1\right) \left( 165x^{4}+330x^{3}+815x^{2}+650x+417\right) }{%
77x^{6}+231x^{5}+560x^{4}+735x^{3}+623x^{2}+294x+60}+\frac{1}{2x^{3}}\frac{1%
}{\sinh ^{2}\left( 1/x\right) } \\
&&-\frac{3}{2x}+\frac{1}{2x^{2}}-\frac{7}{54}\frac{6125x^{4}+6545x^{2}+2178}{%
x^{5}\left( 35x^{2}+33\right) ^{3}}:=f_{01}\left( \frac{1}{x}\right) .
\end{eqnarray*}%
Arranging gives%
\begin{eqnarray*}
f_{01}\left( t\right) &=&\frac{t}{2}\left( \frac{t}{\sinh t}\right) ^{2}+%
\frac{7}{30}\frac{t\left( t+2\right) \left(
417t^{4}+650t^{3}+815t^{2}+330t+165\right) }{%
60t^{6}+294t^{5}+623t^{4}+735t^{3}+560t^{2}+231t+77} \\
&&-\frac{3}{2}t+\frac{1}{2}t^{2}-\frac{7}{54}t^{7}\frac{%
2178t^{4}+6545t^{2}+6125}{\left( 33t^{2}+35\right) ^{3}}.
\end{eqnarray*}%
where $t=1/x\in \left( 0,1\right) $. Applying the first inequality of (\ref%
{t/sht>}) we have%
\begin{eqnarray*}
f_{01}\left( t\right) &>&\frac{t}{2}\left( 1-\frac{1}{6}t^{2}+\frac{7}{360}%
t^{4}-\frac{31}{15\,120}t^{6}+\frac{127}{604\,800}t^{8}-\frac{73}{3421\,440}%
t^{10}\right) ^{2} \\
&&+\frac{7}{30}\frac{t\left( t+2\right) \left(
417t^{4}+650t^{3}+815t^{2}+330t+165\right) }{%
60t^{6}+294t^{5}+623t^{4}+735t^{3}+560t^{2}+231t+77} \\
&&-\frac{3}{2}t+\frac{1}{2}t^{2}-\frac{7}{54}t^{7}\frac{%
2178t^{4}+6545t^{2}+6125}{\left( 33t^{2}+35\right) ^{3}} \\
&=&\frac{t^{11}\times p_{22}\left( t\right) }{\left( 33t^{2}+35\right)
^{3}\left( 60t^{6}+294t^{5}+623t^{4}+735t^{3}+560t^{2}+231t+77\right) },
\end{eqnarray*}%
where $p_{22}\left( t\right) =\sum_{k=0}^{22}a_{k}t^{k}$ with%
\begin{equation*}
\begin{array}{ccccc}
a_{22} & a_{21} & a_{20} & a_{19} & a_{18} \\ 
\frac{58\,619}{119\,439\,360} & \frac{2872\,331}{1194\,393\,600} & -\frac{%
150\,639\,953}{50\,164\,531\,200} & -\frac{402\,182\,039}{11\,943\,936\,000}
& \frac{214\,165\,238\,137}{6437\,781\,504\,000}%
\end{array}%
\end{equation*}%
\begin{equation*}
\begin{array}{cccc}
a_{17} & a_{16} & a_{15} & a_{14} \\ 
\frac{224\,320\,158\,179}{492\,687\,360\,000} & -\frac{1469\,516\,232\,022%
\,339}{4780\,052\,766\,720\,000} & -\frac{107\,829\,513\,340\,517}{%
19\,510\,419\,456\,000} & \frac{1227\,464\,630\,525\,327}{%
573\,606\,332\,006\,400}%
\end{array}%
\end{equation*}%
\begin{equation*}
\begin{array}{cccc}
a_{13} & a_{12} & a_{11} & a_{10} \\ 
\frac{40\,990\,762\,057\,313\,921}{682\,864\,680\,960\,000} & \frac{%
1859\,898\,503\,651\,431}{585\,312\,583\,680\,000} & -\frac{%
13\,202\,571\,814\,150\,457}{24\,831\,442\,944\,000} & -\frac{%
27\,685\,269\,148\,007\,477}{74\,494\,328\,832\,000}%
\end{array}%
\end{equation*}%
\begin{equation*}
\begin{array}{cccc}
a_{9} & a_{8} & a_{7} & a_{6} \\ 
\frac{734\,284\,235\,570\,623}{229\,920\,768\,000} & \frac{%
722\,576\,509\,559\,549}{344\,881\,152\,000} & -\frac{444\,392\,576\,792\,851%
}{19\,707\,494\,400} & -\frac{2348\,474\,362\,865\,491}{59\,122\,483\,200}%
\end{array}%
\end{equation*}%
\begin{equation*}
\begin{array}{cccccc}
a_{5} & a_{4} & a_{3} & a_{2} & a_{1} & a_{0} \\ 
\frac{1776\,198\,096\,757}{51\,321\,600} & -\frac{21\,774\,907\,040\,747}{%
615\,859\,200} & \frac{3740\,791\,861\,177}{13\,685\,760} & \frac{%
4592\,761\,525\,177}{41\,057\,280} & \frac{2341\,955}{9} & \frac{2341\,955}{%
27}%
\end{array}%
\end{equation*}%
It remains to prove $p_{22}\left( t\right) =\sum_{k=0}^{22}a_{k}t^{k}>0$ for 
$t\in (0,1]$. Since $a_{k}>0$ for $k=22$, $21$, $18$, $17$, $14$, $13$, $12$%
, $9$, $8$, $3$, $2$, $1$, $0$ and $a_{k}<0$ for $k=20$, $19$, $16$, $15$ $%
11 $, $10$, $7$, $6$, $4$, we have%
\begin{equation*}
p_{22}\left( t\right)
=\sum_{k=0}^{22}a_{k}t^{k}=\sum_{a_{k}>0}a_{k}t^{k}+%
\sum_{a_{k}<0}a_{k}t^{k}>\sum_{k=20,19,16,15,11,10,7,6,4}a_{k}t^{k}+%
\sum_{k=0}^{3}a_{k}t^{k}:=p_{20}\left( t\right) .
\end{equation*}%
Clearly, the coefficients of the polynomial $-p_{20}\left( t\right) $
satisfy the conditions for Lemma \ref{L-pz}, and%
\begin{equation*}
-p_{20}\left( 1\right) =\sum_{k=20,19,16,15,11,10,7,6,4}\left( -a_{k}\right)
-\sum_{k=0}^{3}a_{k}=-\frac{1135\,768\,202\,621\,781\,774\,901}{%
1792\,519\,787\,520\,000}<0.
\end{equation*}%
It then follows that $p_{20}\left( t\right) >0$ for $t\in (0,1]$, and so is $%
p_{22}\left( t\right) $, which implies $f_{01}\left( t\right) >0$ for $t\in
(0,1]$, Consequently, we thereby deduce that $f_{0}^{\prime \prime }\left(
x\right) >0$ for all $x\geq 1$. This completes the proof.
\end{proof}

As a direct consequence of Theorem \ref{MT-1}, we immediately get the
following

\begin{corollary}
\label{C-1}For $n\in \mathbb{N}$, the double inequality%
\begin{equation*}
\exp \frac{7}{324n^{3}\left( 35n^{2}+33\right) }<\frac{n!}{\sqrt{2\pi n}%
\left( n/e\right) ^{n}\left( n\sinh n^{-1}\right) ^{n/2}}<\lambda \exp \frac{%
7}{324n^{3}\left( 35n^{2}+33\right) }
\end{equation*}%
holds with the best constant%
\begin{equation*}
\lambda =\exp f_{0}\left( 1\right) =\frac{1}{\sqrt{2\pi \sinh 1}}\exp \frac{%
22\,025}{22\,032}\approx 1.000\,024\,067\text{.}
\end{equation*}
\end{corollary}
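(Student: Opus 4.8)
The plan is to read Corollary~\ref{C-1} off directly from Theorem~\ref{MT-1}, the only genuine content being an algebraic identification of $f_0$ and the evaluation of $f_0(1)$. First I would verify that $f_0(x)=\ln\Gamma(x+1)-\ln W_2(x)$. Expanding $\ln W_2(x)$ from \eqref{W2} gives
\[
\ln W_2(x)=\ln\sqrt{2\pi}+\tfrac12\ln x+x\ln x-x+\tfrac{x}{2}\ln\!\big(x\sinh\tfrac1x\big)+\frac{7}{324}\frac{1}{x^{3}(35x^{2}+33)},
\]
and since $\tfrac12\ln x+x\ln x=(x+\tfrac12)\ln x$, subtracting this from $\ln\Gamma(x+1)$ reproduces exactly the expression defining $f_0$ in Theorem~\ref{MT-1}. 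Hence $e^{f_0(x)}=\Gamma(x+1)/W_2(x)$, which is the bridge between the theorem and the corollary.

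Specializing to $x=n\in\mathbb{N}$ and using $\Gamma(n+1)=n!$, this identity becomes
\[
\frac{n!}{\sqrt{2\pi n}\,(n/e)^{n}(n\sinh n^{-1})^{n/2}}=e^{f_{0}(n)}\exp\frac{7}{324n^{3}(35n^{2}+33)}.
\]
Now I would invoke Theorem~\ref{MT-1}: $f_0$ is strictly decreasing and maps $(1,\infty)$ onto the open interval $(0,f_0(1))$. Consequently $0<f_0(n)<f_0(1)$ for every integer $n\geq2$, while $f_0(1)$ is the endpoint value itself. Exponentiating yields $1<e^{f_0(n)}<\lambda:=e^{f_0(1)}$ for $n\geq2$, with $e^{f_0(1)}=\lambda$ at $n=1$; multiplying through by the positive factor $\exp\frac{7}{324n^{3}(35n^{2}+33)}$ then delivers the stated double inequality, the upper bound being attained as equality precisely at $n=1$.

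For sharpness and the explicit value of $\lambda$: the lower factor $1$ is optimal because $f_0(x)\to0$ as $x\to\infty$, so $e^{f_0(n)}\downarrow1$ and no larger lower factor is admissible; the upper constant $\lambda$ is optimal because equality holds at $n=1$, so it cannot be decreased. It then remains to evaluate $f_0(1)$. Since $\Gamma(2)=1$ and $\ln1=0$, the defining formula collapses to $f_0(1)=1-\ln\sqrt{2\pi}-\tfrac12\ln\sinh1-\tfrac{7}{324\cdot68}$; using $\tfrac{7}{324\cdot68}=\tfrac{7}{22032}$ so that $1-\tfrac{7}{22032}=\tfrac{22025}{22032}$, and combining $-\ln\sqrt{2\pi}-\tfrac12\ln\sinh1=-\ln\sqrt{2\pi\sinh1}$, I obtain $f_0(1)=\tfrac{22025}{22032}-\ln\sqrt{2\pi\sinh1}$ and hence $\lambda=e^{f_0(1)}=\frac{1}{\sqrt{2\pi\sinh1}}\exp\frac{22025}{22032}$, matching the asserted value.

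There is no serious obstacle here, as everything follows from Theorem~\ref{MT-1}; the one point demanding care is the strictness bookkeeping at $n=1$. Because the theorem furnishes the \emph{open} range $(0,f_0(1))$ on $(1,\infty)$, the upper inequality is strict for $n\geq2$ but degenerates to equality at $n=1$, and it is exactly this boundary case that certifies $\lambda$ as the best (smallest admissible) constant. I would therefore phrase the upper bound so as to make clear that $n=1$ is the extremal instance.
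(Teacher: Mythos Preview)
Your argument is correct and is exactly the approach the paper takes: the paper offers no separate proof of Corollary~\ref{C-1}, introducing it only with ``As a direct consequence of Theorem~\ref{MT-1}, we immediately get the following,'' and you have simply written out that deduction in full, including the evaluation of $f_0(1)$. Your observation that the upper bound is attained (as equality) at $n=1$ is accurate and is precisely what makes $\lambda$ the best constant; the paper's strict inequality sign at $n=1$ is a minor imprecision that you have handled correctly.
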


Denote by%
\begin{equation*}
D_{0}\left( y\right) =y-\ln \left( 1+y\right) \text{, \ }y=\dfrac{7}{%
324x^{3}\left( 35x^{2}+33\right) }.
\end{equation*}%
Then it is easy to check that for $x>1$,%
\begin{equation*}
\frac{dD_{0}\left( y\right) }{dx}=-\tfrac{49}{324}\tfrac{175x^{2}+99}{%
x^{4}\left( 35x^{2}+33\right) ^{2}\left( 11\,340x^{5}+10\,692x^{3}+7\right) }%
<0,
\end{equation*}%
\begin{equation*}
\frac{d^{2}D_{0}\left( y\right) }{dx^{2}}=\tfrac{343}{54}\tfrac{\left(
18\,191\,250x^{9}+37\,110\,150x^{7}+24\,992\,550x^{5}+6125x^{4}+5821%
\,794x^{3}+6545x^{2}+2178\right) }{x^{5}\left( 35x^{2}+33\right) ^{3}\left(
11\,340x^{5}+10\,692x^{3}+7\right) ^{2}}>0.
\end{equation*}%
That is to say, $x\mapsto D_{0}\left( y\right) $ is decreasing and convex on 
$\left( 1,\infty \right) $. Theorem \ref{MT-1} together with this yields
that so is $f_{0}^{\ast }\left( x\right) :=f_{0}\left( x\right) +D_{0}\left(
y\right) $ on $\left( 1,\infty \right) $.

\begin{corollary}
\label{C-2}The function%
\begin{equation*}
f_{0}^{\ast }\left( x\right) =\ln \Gamma \left( x+1\right) -\ln \sqrt{2\pi }%
-\left( x+\frac{1}{2}\right) \ln x+x-\frac{x}{2}\ln \left( x\sinh \frac{1}{x}%
\right) -\ln \left( 1+\frac{7}{324x^{3}\left( 35x^{2}+33\right) }\right)
\end{equation*}%
is also strictly decreasing and convex from $\left( 1,\infty \right) $ onto $%
\left( 0,f_{0}^{\ast }\left( 1\right) \right) $, where%
\begin{equation*}
f_{0}^{\ast }\left( 1\right) =1-\ln \frac{22\,039}{22\,032}-\ln \sqrt{2\pi
\sinh 1}\approx 0.00002412.
\end{equation*}
\end{corollary}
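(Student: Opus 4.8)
The bulk of the work has already been carried out in the paragraph just before the statement: there it is shown that $x\mapsto D_{0}\left( y\right) =y-\ln \left( 1+y\right) $, with $y=y\left( x\right) =7/\left( 324x^{3}\left( 35x^{2}+33\right) \right) $, is decreasing and convex on $\left( 1,\infty \right) $ — the two displayed derivative formulas have numerators $175x^{2}+99$, resp.\ a polynomial with all coefficients positive, and denominators that are products of positive factors for $x>0$ — and hence, via Theorem~\ref{MT-1}, that $f_{0}^{\ast }=f_{0}+D_{0}\left( y\right) $ is decreasing and convex on $\left( 1,\infty \right) $. So the plan is simply to record the decomposition cleanly and then compute the two endpoint values.

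First I would verify the identity $f_{0}^{\ast }\left( x\right) =f_{0}\left( x\right) +D_{0}\left( y\left( x\right) \right) $ summand by summand: the definitions of $f_{0}$ and $f_{0}^{\ast }$ agree in every term except the last, and
\[
-\ln \left( 1+\frac{7}{324x^{3}\left( 35x^{2}+33\right) }\right) =-\frac{7}{324x^{3}\left( 35x^{2}+33\right) }+D_{0}\left( y\left( x\right) \right) ,
\]
with $-7/\left( 324x^{3}\left( 35x^{2}+33\right) \right) $ precisely the last summand of $f_{0}$. Strict monotonicity of $f_{0}^{\ast }$ then follows from the strict monotonicity of $f_{0}$ in Theorem~\ref{MT-1}. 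Next, the range: since $y\left( x\right) \rightarrow 0^{+}$ and $D_{0}\left( y\right) \rightarrow 0$ as $y\rightarrow 0$, one has $D_{0}\left( y\left( x\right) \right) \rightarrow 0$, so together with $\lim_{x\rightarrow \infty }f_{0}\left( x\right) =0$ (Theorem~\ref{MT-1}) we get $\lim_{x\rightarrow \infty }f_{0}^{\ast }\left( x\right) =0$; and at $x=1$, using $y\left( 1\right) =7/\left( 324\times 68\right) =7/22\,032$, $f_{0}\left( 1\right) =22\,025/22\,032-\ln \sqrt{2\pi \sinh 1}$ and $22\,025+7=22\,032$,
\[
f_{0}^{\ast }\left( 1\right) =f_{0}\left( 1\right) +\left( \frac{7}{22\,032}-\ln \frac{22\,039}{22\,032}\right) =1-\ln \frac{22\,039}{22\,032}-\ln \sqrt{2\pi \sinh 1}\approx 0.00002412 .
\]
Since $f_{0}^{\ast }$ is continuous on $\left[ 1,\infty \right) $ and strictly decreasing, the image of $\left( 1,\infty \right) $ is exactly $\left( 0,f_{0}^{\ast }\left( 1\right) \right) $, with neither endpoint attained.

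I do not expect any real obstacle here; the only thing demanding care is the bookkeeping in the preceding paragraph — confirming that $dD_{0}/dx=\tfrac{y}{1+y}y^{\prime }$ simplifies to the quoted expression (the constants $11\,340=324\times 35$ and $10\,692=324\times 33$ arising from clearing the denominator of $1+y$), that one further differentiation yields the claimed all‑positive‑coefficient numerator, and the final arithmetic leading to $f_{0}^{\ast }\left( 1\right) $. None of Lemmas~\ref{L-dpsi>}--\ref{L-pz} is needed, all the analytic difficulty having been absorbed into Theorem~\ref{MT-1}.
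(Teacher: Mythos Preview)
Your proposal is correct and follows exactly the paper's approach: the decomposition $f_{0}^{\ast}=f_{0}+D_{0}(y)$ together with Theorem~\ref{MT-1} and the derivative computations for $D_{0}(y)$ displayed just before the corollary. Your write-up merely makes explicit the endpoint computations (the limit at infinity and the value $f_{0}^{\ast}(1)$) that the paper leaves implicit, and your arithmetic checks out.
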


\begin{remark}
Corollary \ref{C-2} offers another approximation formula%
\begin{equation}
\Gamma \left( x+1\right) \thicksim \sqrt{2\pi x}\left( \dfrac{x}{e}\right)
^{x}\left( x\sinh \frac{1}{x}\right) ^{x/2}\left( 1+\frac{7}{324}\frac{1}{%
x^{3}\left( 35x^{2}+33\right) }\right) =W_{2}^{\ast }\left( x\right) .
\label{W2*}
\end{equation}%
Also, for $n\in \mathbb{N}$, it holds that%
\begin{equation*}
1+\frac{7}{324n^{3}\left( 35n^{2}+33\right) }<\frac{n!}{\sqrt{2\pi n}\left(
n/e\right) ^{n}\left( n\sinh n^{-1}\right) ^{n/2}}<\lambda ^{\ast }\left( 1+%
\frac{7}{324n^{3}\left( 35n^{2}+33\right) }\right)
\end{equation*}%
with the best constant%
\begin{equation*}
\lambda ^{\ast }=\exp f_{0}^{\ast }\left( 1\right) =\frac{22\,032}{22\,039}%
\frac{e}{\sqrt{2\pi \sinh 1}}\approx 1.000\,024117\text{.}
\end{equation*}
\end{remark}

\section{Numerical comparisons}

It is known that an excellent approximation for gamma function is fairly
accurate but relatively simple. In view of the above, we list some known
approximation formulas for gamma function and compare them with $W_{1}\left(
x\right) $ given by (\ref{W1}) and our new one $W_{2}\left( x\right) $
defined by (\ref{W2}) in this section.

It has been shown in \cite{Chen-JNT-164-2016} that as $x\rightarrow \infty $%
, Ramanujan's \cite[P. 339]{Ramanujan-SB-1988} approximation formula%
\begin{equation*}
\Gamma \left( x+1\right) \thicksim \sqrt{\pi }\left( \dfrac{x}{e}\right)
^{x}\left( 8x^{3}+4x^{2}+x+\dfrac{1}{30}\right) ^{1/6}\left( 1+O\left( \frac{%
1}{x^{4}}\right) \right) :=R\left( x\right) ,
\end{equation*}%
Smith's one \cite[Eq. (42)]{Smith-2006}%
\begin{equation*}
\Gamma \left( x+\frac{1}{2}\right) \thicksim \sqrt{2\pi }\left( \dfrac{x}{e}%
\right) ^{x}\left( 2x\tanh \frac{1}{2x}\right) ^{x/2}\left( 1+O\left( \frac{1%
}{x^{5}}\right) \right) :=S\left( x\right) ,
\end{equation*}%
Nemes' one \cite[Corollary 4.1]{Nemes-AM-95-2010}%
\begin{equation*}
\Gamma \left( x+1\right) \thicksim \sqrt{2\pi x}\left( \dfrac{x}{e}\right)
^{x}\left( 1+\frac{1}{12x^{2}-1/10}\right) ^{x}\left( 1+O\left( \frac{1}{%
x^{5}}\right) \right) =:N_{1}\left( x\right) ,
\end{equation*}%
Chen's one \cite{Chen-JNT-164-2016}%
\begin{equation}
\Gamma (x+1)\thicksim \sqrt{2\pi x}\left( \dfrac{x}{e}\right) ^{x}\left( 1+%
\dfrac{1}{12x^{3}+24x/7-1/2}\right) ^{x^{2}+53/210}\left( 1+O\left( \frac{1}{%
x^{7}}\right) \right) :=C\left( x\right) .  \label{C}
\end{equation}%
Moreover, it is easy to check that Nemes' \cite{http/gamma} another one%
\begin{equation}
\Gamma \left( x+1\right) \thicksim \sqrt{2\pi x}\left( \dfrac{x}{e}\right)
^{x}\exp \left( \frac{210x^{2}+53}{360x\left( 7x^{2}+2\right) }\right)
\left( 1+O\left( \frac{1}{x^{7}}\right) \right) :=N_{2}\left( x\right) ,
\label{N2}
\end{equation}%
Yand and Chu's \cite[Propositions 4 and 5]{Yang-AMC-270-2015} ones%
\begin{eqnarray*}
\Gamma \left( x+\frac{1}{2}\right) &=&\sqrt{2\pi }\left( \dfrac{x}{e}\right)
^{x}\exp \left( -\dfrac{1}{24}\dfrac{x}{x^{2}+7/120}\right) \left( 1+O\left( 
\frac{1}{x^{5}}\right) \right) :=Y_{1}\left( x\right) , \\
\Gamma \left( x+\frac{1}{2}\right) &=&\sqrt{2\pi }\left( \dfrac{x}{e}\right)
^{x}\exp \left( -\frac{1}{24x}+\frac{7}{2880x}\frac{1}{x^{2}+31/98}\right)
\left( 1+O\left( \frac{1}{x^{7}}\right) \right) :=Y_{2}\left( x\right) ,
\end{eqnarray*}%
Windschitl one \cite{http/gamma}%
\begin{equation*}
\Gamma (x+1)\thicksim \sqrt{2\pi x}\left( \dfrac{x}{e}\right) ^{x}\left(
x\sinh \frac{1}{x}+\frac{1}{810x^{6}}\right) ^{x/2}\left( 1+O\left( \frac{1}{%
x^{7}}\right) \right) =W_{1}\left( x\right) .
\end{equation*}%
For our new ones $W_{2}\left( x\right) $ given in (\ref{W2}) and its
counterpart $W_{2}^{\ast }\left( x\right) $ given in (\ref{W2*}), we easily
check that%
\begin{equation*}
\lim_{x\rightarrow \infty }\frac{\ln \Gamma \left( x+1\right) -\ln
W_{2}\left( x\right) }{x^{-9}}=\lim_{x\rightarrow \infty }\frac{\ln \Gamma
\left( x+1\right) -\ln W_{2}^{\ast }\left( x\right) }{x^{-9}}=\frac{869}{%
2976\,750},
\end{equation*}%
which show that the rate of $W_{2}\left( x\right) $ and $W_{2}^{\ast }\left(
x\right) $ converging to $\Gamma \left( x+1\right) $ are as $x^{-9}$.

From these, we see that our new Windschitl type approximation formulas $%
W_{2}\left( x\right) $ and $W_{2}^{\ast }\left( x\right) $ are the best
among ones listed above, which can be also seen from the following Table 1.%
\begin{equation*}
\text{Table 1: Comparison among }N_{2}\text{ (\ref{N2}), }C\text{ (\ref{C}), 
}W_{1}\text{ (\ref{W1}) and }W_{2}\text{ (\ref{W2})}
\end{equation*}%
\begin{equation*}
\begin{tabular}{|l|l|l|l|l|}
\hline
$x$ & $\left\vert \frac{N_{2}\left( x\right) -\Gamma \left( x+1\right) }{%
\Gamma \left( x+1\right) }\right\vert $ & $\left\vert \frac{C\left( x\right)
-\Gamma \left( x+1\right) }{\Gamma \left( x+1\right) }\right\vert $ & $%
\left\vert \frac{W_{1}\left( x\right) -\Gamma \left( x+1\right) }{\Gamma
\left( x+1\right) }\right\vert $ & $\left\vert \frac{W_{2}\left( x\right)
-\Gamma \left( x+1\right) }{\Gamma \left( x+1\right) }\right\vert $ \\ \hline
$1$ & $1.114\times 10^{-4}$ & $1.398\times 10^{-4}$ & $1.832\times 10^{-4}$
& $2.407\times 10^{-5}$ \\ \hline
$2$ & $1.900\times 10^{-6}$ & $2.222\times 10^{-6}$ & $2.668\times 10^{-6}$
& $2.308\times 10^{-7}$ \\ \hline
$5$ & $4.353\times 10^{-9}$ & $4.956\times 10^{-9}$ & $5.743\times 10^{-9}$
& $1.249\times 10^{-10}$ \\ \hline
$10$ & $3.609\times 10^{-11}$ & $4.088\times 10^{-11}$ & $4.710\times
10^{-11}$ & $2.785\times 10^{-13}$ \\ \hline
$20$ & $2.864\times 10^{-13}$ & $3.240\times 10^{-13}$ & $3.727\times
10^{-13}$ & $5.634\times 10^{-16}$ \\ \hline
$50$ & $4.713\times 10^{-16}$ & $5.330\times 10^{-16}$ & $6.129\times
10^{-16}$ & $1.492\times 10^{-19}$ \\ \hline
$100$ & $3.684\times 10^{-18}$ & $4.166\times 10^{-18}$ & $4.791\times
10^{-18}$ & $2.918\times 10^{-22}$ \\ \hline
\end{tabular}%
\end{equation*}

\section{Acknowledgements}

The authors would like to express their sincere thanks to the editors and
reviewers for their great efforts to improve this paper.

This work was supported by the Fundamental Research Funds for the Central
Universities (No. 2015ZD29) and the Higher School Science Research Funds of
Hebei Province of China (No. Z2015137).

\section{Competing interests}

The authors declare that they have no competing interests.

\section{Authors' contributions}

All authors contributed equally to the writing of this paper. All authors
read and approved the final manuscript.

\end{document}